\newtheorem{proposition}{Proposition}[section]
\newtheorem{lemma}[proposition]{Lemma}
\newtheorem{remark}[proposition]{Remark}
\newtheorem{theorem}[proposition]{Theorem}
\newtheorem{definition}[proposition]{Definition}
\newtheorem*{theorem*}{Theorem}
\newtheorem*{corollary*}{Corollary}
\numberwithin{equation}{section}
\renewcommand{\emptyset}{\varnothing}
\title[Pullbacks of graph C*-algebras]{\vspace*{-15mm}Pullbacks of graph C*-algebras\\ 
from admissible pushouts of graphs}
\begin{document}
\baselineskip14pt
\parskip=0.75\baselineskip
\parindent=7.5mm

\keywords{graph algebra, pushout and pullback, gauge action, quantum spaces: spheres, balls, lens spaces, weighted projective spaces}
\subjclass[2010]{46L45, 46L55,  46L85.}

\maketitle

\begin{center}
\MakeUppercase{Piotr M.~Hajac}\\
{\em Instytut Matematyczny, Polska Akademia Nauk\\
\'Sniadeckich 8, 00-656 Warszawa, Poland\\
E-mail: pmh@impan.pl\\
and\\
Department of Mathematics\\
University of Colorado Boulder\\
2300 Colorado Avenue
Boulder, CO 80309-0395
USA}
\end{center}

\begin{center}
\MakeUppercase{Sarah Reznikoff} \\
{\em Mathematics Department, Kansas State University\\
138 Cardwell Hall, Manhattan, KS 66502 US\\
E-mail: sarahrez@ksu.edu}
\end{center}

\begin{center}
\MakeUppercase{Mariusz Tobolski}\\
{\em Instytut Matematyczny, Polska Akademia Nauk\\
\'Sniadeckich 8, 00-656 Warszawa, Poland\\
E-mail: mtobolski@impan.pl}
\end{center}

\begin{abstract}
We define an admissible decomposition of a graph $E$ into subgraphs $F_1$ and~$F_2$, and consider the intersection graph 
$F_1\cap F_2$
as a subgraph of both $F_1$ and~$F_2$. We prove that, if the graph $E$ is row finite and its decomposition into the subgraphs 
$F_1$ and  $F_2$ is admissible, then the graph C*-algebra $C^*(E)$ of $E$ is the pullback C*-algebra  
of the canonical surjections from $C^*(F_1)$ and $C^*(F_2)$ onto $C^*(F_1\cap F_2)$.
\end{abstract}
\bigskip

\section{Introduction and preliminaries}

Pushouts of graphs have proven to be very useful in the theory of free groups~\cite{jr-s83}. 
We hope that our approach to pullbacks of graph algebras through
pushouts of underlying graphs will also turn out to be beneficial.

A graph C*-algebra is the universal C*-algebra associated to a directed graph.
If one considers a specific class of morphisms of directed graphs (e.g., see~\cite[Definition~1.6.2]{aasm17}), 
then the graph C*-algebra construction yields a covariant functor from the 
category of directed graphs to the category of C*-algebras.
On the other hand, Hong and Szyma\'nski \cite{hs08} showed that a pushout diagram in the category of 
directed graphs can lead to a pullback of C*-algebras. The purpose of this paper is to find 
conditions on the pushout diagram of graphs that give rise to the pullback diagram of the
associated graph \mbox{C*-algebras}. This leads to a notion of an admissible decomposition of a directed graph,
which we present in Section~2. The main result is contained in Section~3 and examples are in Section~4.

Our result is closely related to \cite[Corollary~3.4]{kpsw16}, where it is proven, in an appropriate form, 
for $k$-graphs without sinks but not necessarily row finite.
Herein, we focus our attention on row-finite 1-graphs but possibly with sinks. 
Thus our results are complementary and lead to the following question:
Is it possible to get rid of both of these assumtions (``row finite'' and ``no sinks'') 
at the same time to prove a more general pushout-to-pullback theorem?

In this paper, by a graph $E$ we will always mean a {\em directed graph}, i.e.~a~quad\-ruple 
$(E^0,E^1,s_E,r_E)$, where $E^0$ is the set of vertices,
$E^1$ is the set of edges, $s_E:E^1\to E^0$ is the source map and $r_E:E^1\to E^0$ is the range map.
A graph $E$ is called {\em row finite} if each vertex emits only a finite  number of edges. 
Next, $E$~is called~{\em finite} if both $E^0$ and $E^1$ are finite. A vertex is called a \emph{sink} if it does not emit any edge.
By a {\em path} $\mu$ in $E$ of length $|\mu|=k>0$ we mean a sequence of composable edges 
$\mu=e_1e_2\ldots e_k$. We treat vertices as paths of length zero. The set of all finite paths for a graph $E$ is denoted
by ${\rm Path}(E)$. One extends the source and the range maps to ${\rm Path}(E)$ in a natural way.
We denote the extended source and range maps by $s_{P\!E}$ and $r_{P\!E}$, respectively.

\begin{definition}
The \emph{graph \mbox{C*-algebra}} $C^*(E)$ of a row-finite graph $E$ is the universal \mbox{C*-algebra} generated by mutually 
orthogonal projections $P:=\big\{P_v\;|\;v\in E^0\big\}$ and partial isometries $S:=\big\{S_e\;|\;e\in E^1\big\}$
 satisfying the \emph{Cuntz--Krieger relations}~\cite{ck80}:
\begin{align}
S_e^*S_e &=P_{r_E(e)} && \text{for all }e\in E_1\text{, and}\tag{\text{CK1}} \label{eq:CK1} \\
\sum_{e\in s^{-1}_E(v)}\!\! S_eS_e^*&=P_v && \text{for all }v\in E_0\text{ that are not sinks.}\tag{CK2} \label{eq:CK2}
\end{align}
The datum $\{S,P\}$ is called a \emph{Cuntz--Krieger $E$-family}.
\end{definition}
\noindent
One can show that the above relations imply the standard path-algebraic relations:
\begin{equation}\label{pathalg}
S_f^*S_e=0 \quad\text{for}\quad e\neq f,\quad P_{s_E(e)}S_e=S_e=S_eP_{r_E(e)}\,.
\end{equation}

Any graph C*-algebra $C^*(E)$ can be endowed with a natural circle action 
\begin{equation}
\alpha:U(1)\longrightarrow\mathrm{Aut}(C^*(E))
\end{equation}
defined by its values on the generators:
\begin{equation}
\alpha_\lambda(P_v)=P_v\;,\qquad
\alpha_\lambda(S_e)=\lambda S_e\;,\quad\text{where}\quad \lambda\in U(1),\quad v\in E_0,\quad e\in E_1.
\end{equation}
The thus defined circle action is called the \emph{gauge action}.

A subset $H$ of $E^0$ is called {\em hereditary} iff, for any $v\in H$ such that there 
is a path starting at $v$ and ending at $w\in E^0$, we have $w\in H$. (Note that we can equivalently define the property of being hereditary
by replacing ``path'' with ``edge''.)
A subset $H$ of $E^0$ is called {\em saturated} iff there does \emph{not}
exist a vertex $v\notin H$ such that
\begin{equation}
0<|s_E^{-1}(v)|<\infty\qquad\text{and}\qquad r_E(s_E^{-1}(v))\subseteq H.
\end{equation}

Saturated hereditary subsets play a fundamental role in the theory of gauge-invariant 
ideals of graph C*-algebras.
It follows from \cite[Lemma~4.3]{bprs00} that, for any hereditary subset~$H$,
the algebraic ideal 
generated by $\{P_v\;|\;v\in H\}$ is of the form
\begin{equation}\label{herideal}
I_E(H)=\mathrm{span}\left\{S_xS_y^*\;|\;x,y\in\text{Path}(E)\,,\;r_{P\!E}(x)=r_{P\!E}(y)\in H\right\}.
\end{equation}
  \noindent
Here, for any path $\mu=e_1\ldots e_k$, we adopt the notation $S_\mu:=S_{e_1}\ldots S_{e_k}$.  Furthermore, if $\mu$ is a vertex,
then  $S_\mu:=P_\mu$.

By \cite[Theorem~4.1~(b)]{bprs00}, quotients by closed ideals generated by saturated hereditary subsets can also
be realised  as graph C*-algebras 
by constructing a quotient graph. Given a hereditary subset $H$ of $E^0$, the \emph{quotient graph} $E/H$ is 
given by
\begin{equation}
(E/H)^0:=E^0\setminus H\qquad\text{and}\qquad(E/H)^1:=E^1\setminus r_E^{-1}(H).
\end{equation}
Note that the restriction-corestriction of the range map $r_E$
to $(E/H)^1\to (E/H)^0$ makes sense for any~$H$, but the same restriction-corestriction of the source map $s_E$ exists  because 
$H$ is hereditary.

Moreover, if $H$ is also saturated, we obtain the
*-isomorphism
\begin{equation}\label{quotient}
C^*(E)\slash \overline{I_{E}(H)}\cong C^*(E/H),
\end{equation}
where $ \overline{I_{E}(H)}$ is the norm closure of $I_E(H)$. 

\section{Admissible decompositions of graphs}

Given two graphs $E=(E^0,E^1,s_E,r_E)$ and $G=(G^0,G^1,s_G,r_G)$, 
one can define a {\em graph morphism} $f:E\to G$ as a pair
of mappings $f^0:E^0\to G^0$ and $f^1:E^1\to G^1$ satisfying
\begin{equation}
s_G\circ f^1=f^0\circ s_E\quad\text{and}\quad r_G\circ f^1=f^0\circ r_E\,.
\end{equation}
We call the thus obtained category the
 {\em category of directed graphs}. 
 
A {\em subgraph} of a graph $E=(E^0,E^1,s_E,r_E)$ is a graph $F=(F^0,F^1,s_F,r_F)$
such that 
\begin{equation}
F^0\subseteq E^0, \quad F^1\subseteq E^1,\quad \forall\;e\in F^1\colon s_F(e)=s_E(e)\;\text{and}\; r_F(e)=r_E(e).
\end{equation}
Next, let $F_1$ and $F_2$ be two subgraphs of a graph~$E$. We define their {\em intersection} and \emph{union} as follows:
\begin{gather}
F_1\cap F_2:=(F_1^0\cap F_2^0,F_1^1\cap F_2^1,s_\cap,r_\cap),\nonumber\\
\forall\; e\in F_1^1\cap F_2^1:\; s_\cap(e):=s_E(e),~r_\cap(e):=r_E(e),\nonumber\\
F_1\cup F_2:=(F_1^0\cup F_2^0,F_1^1\cup F_2^1,s_\cup,r_\cup),\nonumber\\
\forall\; e\in F_1^1\cup  F_2^1 :\; s_\cup(e):=s_E(e),\; r_\cup(e):=r_E(e).
\end{gather}

To consider pushout diagrams in the category of directed graphs, we
follow the convention used in~\cite{ek79}.
 If a graph $E$ has two subgraphs $F_1$ and $F_2$
such that $E=F_1\cup F_2$, then the following diagram
\begin{equation}\label{}
\begin{gathered}
\xymatrix{
& E \\
F_1 \ar[ru]
& 
& F_2 \ar[lu]\\
& F_1\cap F_2 \ar[lu] \ar[ru]
}
\end{gathered}
\end{equation}
is automatically a pushout diagram.
Let us illustrate the concept of a pushout diagram of  graphs with the following example:
\begin{equation}\label{podles}
\begin{gathered}
\xymatrix{
&
\begin{tikzpicture}[scale=0.7,auto,swap]
\centering
\tikzstyle{vertex}=[circle,fill=black,minimum size=3pt,inner sep=0pt]
\tikzstyle{edge}=[draw,->]
\tikzset{every loop/.style={min distance=20mm,in=130,out=50,looseness=40}}
    \node[vertex] (1) at (0,0) {};
    \node[vertex] (2) at (-1.5,-1.5) {};
    \node[vertex] (3) at (1.5,-1.5) {};
    \path (1) edge [edge, loop above] node {} (1);
    \path (1) edge [edge] node {} (2);
    \path (1) edge [edge] node {} (3);
\end{tikzpicture}
&\\
\begin{tikzpicture}[scale=0.7,auto,swap]
\centering
\tikzstyle{vertex}=[circle,fill=black,minimum size=3pt,inner sep=0pt]
\tikzstyle{edge}=[draw,<-]
\tikzset{every loop/.style={min distance=20mm,in=130,out=50,looseness=40}}
    \node[vertex] (1) at (0,0) {};
    \node[vertex] (2) at (2,0) {};
    \path (1) edge [edge] node {} (2);
    \path (2) edge [edge, loop above] node {} (1);
\end{tikzpicture}
\ar[ur]& & 
\begin{tikzpicture}[scale=0.7,auto,swap]
\centering
\tikzstyle{vertex}=[circle,fill=black,minimum size=3pt,inner sep=0pt]
\tikzstyle{edge}=[draw,->]
\tikzset{every loop/.style={min distance=20mm,in=130,out=50,looseness=40}}
    \node[vertex] (1) at (0,0) {};
    \node[vertex] (2) at (2,0) {};
    \path (1) edge [edge, loop above] node {} (1);
    \path (1) edge [edge] node {} (2);
\end{tikzpicture}
\ar[ul]\\
&
\begin{tikzpicture}[scale=0.7,auto,swap]
\centering
\tikzstyle{vertex}=[circle,fill=black,minimum size=3pt,inner sep=0pt]
\tikzstyle{edge}=[draw,->]
\tikzset{every loop/.style={min distance=20mm,in=130,out=50,looseness=40}}
    \node[vertex] (1) at (0,0) {};
    \path (1) edge [edge, anchor=center, loop above] node {} (1);
\end{tikzpicture}
\ar[ur]\ar[ul]&
.}
\end{gathered}
\end{equation}

We are now  ready  to define an admissible decomposition of a row-finite graph:
\begin{definition}\label{decompose}
An unordered pair $\{F_1,F_2\}$ of subgraphs of a~row-finite graph $E$  is called an {\em admissible decomposition} of $E$
 iff the following conditions are satisfied:
\begin{enumerate}
\item $E=F_1\cup F_2$\,, \label{push1}
\item if $v$ is a sink in $F_1\cap F_2$\,, then $v$ is a sink in $F_i$\,, $ i=1,2$,\label{sat2}
\item $F_1^1\cap F^1_2= r^{-1}_{F_i}(F_1^0\cap F^0_2)$, $i=1,2$.
\label{edgecon}
\end{enumerate}
\end{definition}
\noindent
Note that, by \eqref{push1} in Definition~\ref{decompose}, 
$E$ is a pushout of $F_1$ and $F_2$ over their intersection.
Observe also that Diagram~\eqref{podles} gives an example of an admissible decomposition of a graph.

Definition~\ref{decompose} prompts the following two lemmas.
\begin{lemma}\label{quot}
Let $\{F_1,F_2\}$ be an admissible decomposition of a row-finite graph $E$. Then 
$F_1\cap F_2=F_i/(F^0_i\setminus (F^0_1\cap F^0_2))$ and $F_i=E/(E^0\setminus F_i^0)$,
for $i=1,2$.
\end{lemma}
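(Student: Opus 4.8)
The plan is to verify both identities directly from the definition of the quotient graph, using condition~\eqref{edgecon} of Definition~\ref{decompose} to control which edges survive. Because the quotient construction is only defined for hereditary subsets, in each case I would first check that the subset being quotiented is hereditary; the vertex-set equalities are then immediate, and essentially all the work goes into matching the edge sets.

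For the first identity I would fix $i$ and set $H:=F_i^0\setminus(F_1^0\cap F_2^0)\subseteq F_i^0$. To see that $H$ is hereditary in $F_i$, suppose $v\in H$ emits an edge $e\in F_i^1$ with $r_{F_i}(e)=w$; if $w\in F_1^0\cap F_2^0$, then \eqref{edgecon} would force $e\in F_1^1\cap F_2^1$, whence $v=s_E(e)\in F_1^0\cap F_2^0$, contradicting $v\in H$. Thus $w\in H$ and $H$ is hereditary. Then $(F_i/H)^0=F_i^0\setminus H=F_1^0\cap F_2^0=(F_1\cap F_2)^0$, while $(F_i/H)^1=F_i^1\setminus r_{F_i}^{-1}(H)=r_{F_i}^{-1}(F_1^0\cap F_2^0)$, and this last set equals $F_1^1\cap F_2^1=(F_1\cap F_2)^1$ precisely by \eqref{edgecon}. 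Since all source and range maps are restrictions of $s_E$ and $r_E$, the two graphs coincide.

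For the second identity I would fix $i$ and set $H':=E^0\setminus F_i^0\subseteq E^0$. To check hereditariness in $E$, I would take $v\in H'$ emitting an edge $e\in E^1$ with $r_E(e)=w\in F_i^0$ and derive a contradiction: by \eqref{push1} we have $e\in F_1^1\cup F_2^1$, and $e\notin F_i^1$ (else $v=s_E(e)\in F_i^0$), so $e\in F_j^1$ with $j\neq i$; then $w\in F_j^0$, hence $w\in F_1^0\cap F_2^0$, and \eqref{edgecon} gives $e\in F_1^1\cap F_2^1\subseteq F_i^1$, which is absurd. Therefore $H'$ is hereditary, $(E/H')^0=F_i^0$, and $(E/H')^1=\{e\in E^1:r_E(e)\in F_i^0\}$. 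The inclusion $F_i^1\subseteq(E/H')^1$ is clear, and for the reverse I would run the same dichotomy: an edge with range in $F_i^0$ that lay only in $F_j^1$ would have its range in $F_1^0\cap F_2^0$, forcing membership in $F_i^1$ by \eqref{edgecon}.

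The single combinatorial fact doing all the work is the implication that an edge whose range lies in $F_1^0\cap F_2^0$ must belong to both subgraphs, which is exactly \eqref{edgecon}; I expect the main subtlety to be noticing that this one fact simultaneously supplies hereditariness and both edge-set equalities. Condition~\eqref{sat2} is not needed for this lemma, as it only enters later through saturation when passing to the C*-algebras.
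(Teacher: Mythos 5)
Your proof is correct and follows essentially the same route as the paper: both arguments reduce everything to the observation that Definition~\ref{decompose}(3) identifies $F_1^1\cap F_2^1$ with the edges of $F_i$ whose range lies in $F_1^0\cap F_2^0$, use this to establish hereditariness of the relevant vertex sets, and then match the quotient-graph edge sets directly. The only cosmetic difference is that the paper deduces hereditariness of $E^0\setminus F_i^0$ in $E$ by first rewriting it as $F_j^0\setminus(F_1^0\cap F_2^0)$ and reusing the first step, whereas you verify it by a direct case analysis; your closing remark that condition (2) is not needed here is also accurate, as it enters only in Lemma~\ref{sat}.
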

\begin{proof}
First, note that $F_i^0\setminus (F_1^0\cap F_2^0)$ is hereditary in $F_i$.
 Indeed, take $e\in F^1_i$. Then $s_{F_i}(e)\notin F_1^0\cap F_2^0$ implies $e \notin F_1^1 \cap F_2^1$.
 Hence, by 
 Definition~\ref{decompose}(3), we have
\mbox{$r_{F_i}(e)\!\notin\! F_1^0\!\cap\! F_2^0$}.
Therefore, we can define $F_i/(F_i^0\setminus (F^0_1\cap F^0_2))$,
which coincides with $F_1\cap F_2$ due to Definition~\ref{decompose}(3).

Next, note that 
\begin{equation}
E^0\setminus F_i^0=(F^0_i\cup F^0_j)\setminus  F_i^0= F_j^0\setminus  F_i^0=F_j^0\setminus  (F^0_i\cap F^0_j),
\end{equation} 
where $j\neq i$ and $j=1,2$, so we already know that $E^0\setminus F_i^0$ is hereditary in~$F_j$. To see that it is hereditary in $E$, we only
need to exclude edges starting in $E^0\setminus F_i^0$ and ending in $E^0\setminus F_j^0$. They do not exist because 
$E^1=F^1_i\cup F^1_j$,
so $E^0\setminus F_i^0$ is hereditary in~$E$.

It remains to verify that $F^1_i=r^{-1}_E(F_i^0)$. 
To this end, taking advantage of the admissibility of $(F_i\cap F_j)\subseteq F_i$, we compute
\begin{equation}
r_E^{-1}(F_i^0)\setminus F^1_i=r_{F_j}^{-1}(F_i^0)\setminus F^1_i
=r_{F_j}^{-1}(F_i^0\cap F_j^0)\setminus F^1_i=(F^1_i\cap F_j^1)\setminus F^1_i =\emptyset.
\end{equation}
Therefore, as $F_i^1\subseteq r_E^{-1}(F^0_i)$, we conclude that $F^1_i=r^{-1}_E(F_i^0)$, as desired.
\end{proof}

\begin{lemma}\label{sat}
Let $\{F_1,F_2\}$ be an admissible decomposition of a row-finite graph $E$.
Then $F_i^0\setminus(F_1^0\cap F^0_2)$ is saturated in $F_i$ and in $E$ for $i=1,2$.
\end{lemma}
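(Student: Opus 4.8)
The plan is to abbreviate $H_i:=F_i^0\setminus(F_1^0\cap F_2^0)$ and begin by pinning down the complement of $H_i$ both relative to $F_i$ and relative to $E$, so that the saturation condition reduces to an easy edge-chasing test. Since $F_1^0\cap F_2^0\subseteq F_i^0$, one immediately has $F_i^0\setminus H_i=F_1^0\cap F_2^0$. For the ambient complement I would rerun the set-theoretic computation displayed in the proof of Lemma~\ref{quot} with the roles of $i$ and $j$ interchanged ($j\neq i$), obtaining $E^0\setminus F_j^0=F_i^0\setminus(F_i^0\cap F_j^0)=H_i$, and hence $E^0\setminus H_i=F_j^0$. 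Throughout I will use that $F_1\cap F_2=F_i\cap F_j$ has edge set $F_1^1\cap F_2^1$, with sources and ranges inherited from $s_E$ and $r_E$.

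To show $H_i$ is saturated in $F_i$, I would argue by contradiction. Suppose some $v\in F_i^0\setminus H_i=F_1^0\cap F_2^0$ satisfies $0<|s_{F_i}^{-1}(v)|<\infty$ and $r_{F_i}(s_{F_i}^{-1}(v))\subseteq H_i$. Then every edge $e\in F_i^1$ emitted by $v$ has $r_{F_i}(e)\notin F_1^0\cap F_2^0$, so by Definition~\ref{decompose}(3) none of these edges lies in $F_1^1\cap F_2^1$. Thus $v$ emits no edge of $F_1\cap F_2$, i.e.\ it is a sink there; Definition~\ref{decompose}(2) then forces $v$ to be a sink in $F_i$, contradicting $|s_{F_i}^{-1}(v)|>0$.

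To show $H_i$ is saturated in $E$, I would likewise assume a violating vertex $v\in E^0\setminus H_i=F_j^0$: it is not a sink in $E$ (finiteness of $s_E^{-1}(v)$ being automatic by row finiteness), and every edge it emits in $E$ lands in $H_i=E^0\setminus F_j^0$. Since no such edge ends in $F_j^0$, the vertex $v$ emits no edge of $F_j$, so it is a sink in $F_j$; as $E^1=F_i^1\cup F_j^1$ and $v$ is not a sink in $E$, it must emit an edge of $F_i^1$, whence $v\in F_i^0$ and so $v\in F_1^0\cap F_2^0$. Because $v$ emits no edge of $F_j^1\supseteq F_1^1\cap F_2^1$, it is a sink in $F_1\cap F_2$, and Definition~\ref{decompose}(2) again makes $v$ a sink in $F_i$, contradicting that it emits an $F_i$-edge.

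The step I expect to demand the most care is the $E$-level argument: one must first verify that the offending vertex genuinely lies in $F_1^0\cap F_2^0$ before one is entitled to regard it as a sink of $F_1\cap F_2$ and invoke Definition~\ref{decompose}(2). The remainder is bookkeeping with the two complement identities, and it is worth noting that condition~(3) is what powers the $F_i$-case while condition~(2) is the common mechanism that closes both arguments.
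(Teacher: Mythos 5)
Your proof is correct and follows essentially the same route as the paper's: negate saturation, use condition (3) to conclude the offending vertex is a sink in $F_1\cap F_2$, and derive a contradiction with condition (2). You actually spell out a couple of steps the paper leaves implicit (the identification $E^0\setminus H_i=F_j^0$ and why the vertex is a sink in $F_1\cap F_2$ in the ambient case), but the argument is the same.
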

\begin{proof}
If $F_i^0\setminus (F_1^0\cap F^0_2)$ were not saturated in $F_i$, then there would
exist a vertex $v$ in 
\begin{equation}
F_i^0\setminus (F_i^0\setminus (F_1^0\cap F^0_2))=F_1^0\cap F^0_2
\end{equation}
 such that
\begin{equation}
	s_{F_i}^{-1}(v)\neq\emptyset\quad\text{and}\quad r_{F_i}(s_{F_i}^{-1}(v))\subseteq F^0_i\setminus(F_1^0\cap F_2^0).
\end{equation}
Thus we would have a vertex in $F_1\cap F_2$ that is a sink in $F_1\cap F_2$ but not in $F_i$, which contradicts 
Definition~\ref{decompose}(\ref{sat2}).

Much in the same way, if $F_i^0\setminus (F_1^0\cap F^0_2)$ were not saturated in $E$,
then there would exist a vertex 
\begin{equation}
w\in E^0\setminus(F_i^0\setminus(F_1\cap F_2^0))=F^0_j\,,
\end{equation}
 where
$j\neq i$ and $j=1,2$, such that 
\begin{equation}
	s_E^{-1}(w)\neq\emptyset\quad\text{and}\quad r_E(s^{-1}_E(w))\subseteq F_i^0\setminus(F_1^0\cap F^0_2).
\end{equation}
Hence, there is $e\in s^{-1}_E(w)$ such that $r_E(e)\notin F_j^0$. As $E^1=F_i^1\cup F^1_j$, 
it follows that $e\in F^1_i$, so $w=s_E(e)\in F^0_i$.
Consequently, $w$ is a sink in $F_1\cap F_2$ but not in $F_i$, which again contradicts Definition~\ref{decompose}(\ref{sat2}).
\end{proof}

\section{Pullbacks of graph C*-algebras}

Let $\{F_1,F_2\}$ be an admissible decomposition of a row-finite graph $E$. Then, by Lemma~\ref{quot} and Lemma~\ref{sat}, 
we can take an advantage of the formula \eqref{quotient} to 
define the canonical quotient maps:
\begin{gather}
\pi_1:C^*(E)\longrightarrow C^*(E)/\overline{I_E(F_2^0\setminus F_1^0)}\cong C^*(F_1),\\
 \pi_2:C^*(E)\longrightarrow C^*(E)/\overline{I_E(F_1^0\setminus F_2^0)}\cong C^*(F_2),\\
\chi_1:C^*(F_1)\longrightarrow C^*(F_1)/\overline{I_{F_1}(F_1^0 \setminus (F_1^0\cap F_2^0))}\cong C^*(F_1\cap F_2),\\
\quad \chi_2:C^*(F_2)\longrightarrow C^*(F_2)/\overline{I_{F_2}(F_2^0 \setminus (F_1^0\cap F_2^0))}\cong C^*(F_1\cap F_2).
\end{gather}
Note that quotient maps are automatically $U(1)$-equivariant for the gauge action.

This brings us to the main theorem:
\begin{theorem}\label{main}
Let $\{F_1,F_2\}$ be an admissible decomposition of a row-finite graph $E$. 
Then there exist canonical quotient gauge-equivariant $*$-homo\-mor\-phisms rendering the following diagram
\begin{equation}\label{pullback}
\begin{gathered}
\xymatrix{
& C^*(E) \ar[ld]_{\pi_1} \ar[rd]^{\pi_2} \\
C^*(F_1) \ar[rd]_{\chi_1}
& 
& C^*(F_2) \ar[ld]^{\chi_2}\\
& C^*(F_1\cap F_2)
}
\end{gathered}
\end{equation}
commutative. Moreover, this
is a \emph{pullback diagram} of $U(1)$-C*-algebras. 
\end{theorem}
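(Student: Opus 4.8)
The plan is to show that the natural map $\pi:=(\pi_1,\pi_2)\colon C^*(E)\to P$ into the pullback
\[
P:=\left\{(a_1,a_2)\in C^*(F_1)\oplus C^*(F_2)\ \middle|\ \chi_1(a_1)=\chi_2(a_2)\right\}
\]
is a $U(1)$-equivariant $*$-isomorphism. First I would verify the commutativity of \eqref{pullback} directly on the generators of $C^*(E)$: for $v\in F_1^0\cap F_2^0$ and $e\in F_1^1\cap F_2^1$ both $\chi_1\pi_1$ and $\chi_2\pi_2$ fix $P_v$ and $S_e$, whereas on the generators indexed outside $F_1\cap F_2$ both compositions vanish, since $\chi_i\pi_i$ is exactly the quotient by the ideal generated by $E^0\setminus(F_1^0\cap F_2^0)=(F_2^0\setminus F_1^0)\cup(F_1^0\setminus F_2^0)$. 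Because quotient maps are gauge-equivariant (as noted before the theorem), the commuting square lives in the category of $U(1)$-C*-algebras, and it remains to prove that $\pi$ is a bijection onto $P$.

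For injectivity I would compute $\ker\pi=\ker\pi_1\cap\ker\pi_2=\overline{I_E(F_2^0\setminus F_1^0)}\cap\overline{I_E(F_1^0\setminus F_2^0)}$. By Lemma~\ref{quot} and Lemma~\ref{sat} the sets $F_2^0\setminus F_1^0$ and $F_1^0\setminus F_2^0$ are saturated and hereditary in $E$, and they are manifestly disjoint. Invoking the lattice isomorphism between saturated hereditary subsets and gauge-invariant ideals \cite[Theorem~4.1]{bprs00}, which sends intersections of sets to intersections of ideals, I conclude $\ker\pi=\overline{I_E(\emptyset)}=\{0\}$.

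For surjectivity I would run the standard correction argument. Given $(a_1,a_2)\in P$, choose $b_0\in C^*(E)$ with $\pi_1(b_0)=a_1$ (possible since $\pi_1$ is onto); then $\chi_2(a_2)-\chi_2\pi_2(b_0)=\chi_1(a_1)-\chi_1\pi_1(b_0)=0$, so $a_2-\pi_2(b_0)\in\ker\chi_2=\overline{I_{F_2}(F_2^0\setminus F_1^0)}$, where I use that $F_2^0\setminus(F_1^0\cap F_2^0)=F_2^0\setminus F_1^0$. The crux is the identity $\pi_2(\ker\pi_1)=\ker\chi_2$, that is $\pi_2\big(\overline{I_E(F_2^0\setminus F_1^0)}\big)=\overline{I_{F_2}(F_2^0\setminus F_1^0)}$; granting it, I pick $c\in\ker\pi_1$ with $\pi_2(c)=a_2-\pi_2(b_0)$ and put $b:=b_0+c$, so that $\pi(b)=(a_1,a_2)$. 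To establish the identity, take a spanning element $S_xS_y^*$ of $I_E(F_2^0\setminus F_1^0)$ with $r_{P\!E}(x)=r_{P\!E}(y)\in F_2^0\setminus F_1^0$. Since $F_2^1=r_E^{-1}(F_2^0)$ by Lemma~\ref{quot}, reading the edges of $x$ backwards from its range shows that each of them lies in $F_2^1$, and likewise for $y$; hence $\pi_2$ fixes $S_xS_y^*$ and maps it into $I_{F_2}(F_2^0\setminus F_1^0)$, while conversely every spanning element of the latter arises in this way. As $\pi_2$ is a surjective $*$-homomorphism it carries the closed ideal $\ker\pi_1$ onto a closed ideal, and combining this with the density of the algebraic ideals $I_E(\cdot)$ and $I_{F_2}(\cdot)$ yields the required equality of norm closures.

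The main obstacle is this surjectivity step, and specifically the identity $\pi_2(\ker\pi_1)=\ker\chi_2$. Its proof hinges on the geometric fact---extracted from condition~\eqref{edgecon} of Definition~\ref{decompose} through Lemma~\ref{quot}---that any path terminating in $F_2^0\setminus F_1^0$ lies entirely inside $F_2$, so that $\pi_2$ acts as the identity on the relevant monomials; the remaining care concerns passing from the dense algebraic ideals to their norm closures, for which one uses that surjective $*$-homomorphisms send closed ideals onto closed ideals.
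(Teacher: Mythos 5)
Your proposal is correct, and while its skeleton matches the paper's (everything reduces to showing $\ker\pi_1\cap\ker\pi_2=\{0\}$ plus a kernel condition linking $\pi_2(\ker\pi_1)$ with $\ker\chi_2$), the two halves are handled by genuinely different means. For the injectivity, the paper computes $\ker\pi_1\cap\ker\pi_2=\ker\pi_1\ker\pi_2$ and kills the spanning monomials $S_\alpha S_\beta^*S_\gamma S_\delta^*$ by an explicit induction along paths that uses Definition~\ref{decompose}(3) together with \eqref{eq:CK1} and \eqref{pathalg}; you instead invoke the lattice isomorphism of \cite[Theorem~4.1(a)]{bprs00} between saturated hereditary subsets and closed gauge-invariant ideals. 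That works: Lemmas~\ref{quot} and~\ref{sat} supply the saturated-hereditary hypotheses, the two subsets $F_1^0\setminus F_2^0$ and $F_2^0\setminus F_1^0$ are disjoint, an order isomorphism of lattices preserves meets, and the meet of closed (gauge-invariant) ideals is their intersection, so the ideals meet in $\overline{I_E(\emptyset)}=\{0\}$. Your route is shorter and conceptually cleaner; the paper's is more elementary and self-contained, exhibiting exactly where condition~(3) of admissibility enters the Cuntz--Krieger computation. For the surjectivity, the paper delegates the bookkeeping to \cite[Proposition~3.1]{g-p99} and verifies only the containment $\pi_2(\ker\pi_1)\subseteq\ker\chi_2$ (which, as you implicitly note, already follows from commutativity of \eqref{pullback}); you run the correction argument by hand and therefore correctly identify that the substantive containment is $\ker\chi_2\subseteq\pi_2(\ker\pi_1)$, proving the full equality. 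The geometric input you use --- every path of $E$ terminating in $F_2^0\setminus F_1^0$ lies entirely in $F_2$ because $F_2^1=r_E^{-1}(F_2^0)$ by Lemma~\ref{quot}, so $\pi_2$ fixes the spanning monomials of \eqref{herideal} --- is exactly the one the paper uses, and your passage from the algebraic ideals to their closures via the fact that a surjective $*$-homomorphism maps closed ideals onto closed ideals is sound. In short: same theorem, same kernels, but a more abstract injectivity argument and a more explicit, self-contained surjectivity argument.
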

\begin{proof}
Note first that all the canonical surjections in the diagram are well defined due to the admissibility conditions of the decomposition
of the graph
$E$ (see the discussion at the beginning of this section). The commutativity of the diagram is obvious as all maps are 
canonical surjections. Finally,
using \cite[Proposition~3.1]{g-p99} and the surjectivity of $\chi_1$ and~$\chi_2$, to prove that \eqref{pullback} is a pullback diagram,
 it suffices to show that
$\ker\pi_1\cap\ker\pi_2=\{0\}$ and that $\pi_2(\ker\pi_1)\subseteq\ker\chi_2$. 
 
Since $\ker\pi_1$ and $\ker\pi_2$ are closed ideals in a~C*-algebra, we know that
\begin{equation}\label{cap}
\ker\pi_1\cap\ker\pi_2=\ker\pi_1\ker\pi_2.
\end{equation}
Next, as $F_1^0\setminus F_2^0$ and $F_2^0\setminus F_1^0$ are saturated hereditary subsets of $E^0$, 
it follows from \eqref{quotient} that
\begin{equation}
\ker\pi_1=\overline{I_E(F_2^0\setminus F_1^0)}
\qquad\text{and}\qquad
\ker\pi_2=\overline{I_E(F_1^0\setminus F_2^0)}.
\end{equation}
Furthermore, using the characterization \eqref{herideal} of ideals generated by hereditary subsets, 
we know  that an arbitrary element of $\ker\pi_1\ker\pi_2$ is in the closed linear span of elements  of the form 
$S_\alpha S_\beta^*S_\gamma S_\delta^*$, where $\alpha, \beta\in \text{Path}(E)$ with 
\begin{equation}
r_{P\!E}(\alpha)=r_{P\!E}(\beta)\in F_2^0\setminus F_1^0\,,
\end{equation}
and $\gamma,\delta\in \text{Path}(E)$ with 
\begin{equation}
r_{P\!E}(\gamma)=r_{P\!E}(\delta)\in F_1^0\setminus F_2^0\,.
\end{equation}
The conclusion $\ker\pi_1\cap\ker\pi_2=\{0\}$ follows from the analysis of all
possible paths satisfying the above conditions.

Indeed, it follows from \eqref{pathalg} that $S^*_\beta S_\gamma\neq 0$ is possible only if~$s_{P\!E}(\beta)=s_{P\!E}(\gamma)$. 
As $E^1=F^1_1\cup F^1_2$, $r_{P\!E}(\beta)\in F_2^0\setminus F_1^0$ and $r_{P\!E}(\gamma)\in F_1^0\setminus F_2^0$, 
if $\beta=e_1\ldots e_m$ 
and $\gamma=f_1\ldots f_n$, we infer that 
\begin{equation}
r_E(e_{m-1})=s_E(e_m)\in F_2^0\quad \text{and}\quad r_E(f_{n-1})=s_E(f_n)\in F_1^0\,.
\end{equation}
 Hence $r_E(e_{m-1})\in
F^0_1\cap F^0_2$ or $r_E(e_{m-1})\in F^0_2\setminus F^0_1$. Now, we continue by induction using Definition~\ref{decompose}(3)
for the intersection case of the alternative. This brings us to conclusion that $s_{P\!E}(\beta)\in  F_2^0$. Much in the same way, we argue that
$s_{P\!E}(\gamma)\in  F_1^0$. It follows that
$s_{P\!E}(\beta)=s_{P\!E}(\gamma)\in F^0_1\cap F^0_2$. Furthermore, as $r_{P\!E}(\beta)\in F^0_2\setminus F^0_1$ and
$r_{P\!E}(\gamma)\in F^0_1\setminus F^0_2$, we conclude  that~$\beta\neq\gamma$, so
 there exists the smallest index $i$ such that $e_i\neq f_i$. Now, remembering the relations
\eqref{eq:CK1} and \eqref{pathalg},
we compute
\begin{align}
S^*_\beta S_\gamma
&= S^*_{e_{i+1}\ldots e_m}S_{e_i}^*S_{e_{i-1}}^*\ldots S_{e_1}^* S_{e_1}\ldots S_{e_{i-1}}S_{f_i}S_{f_{i+1}\ldots f_n}
\nonumber\\
&=S^*_{e_{i+1}\ldots e_m}S_{e_i}^*S_{f_i}S_{f_{i+1}\ldots f_n}
\nonumber\\
&=0.
\end{align}
\noindent
Finally, if $\beta$ or $\gamma$ is a path of length zero, i.e.\ a vertex, then it is straightforward to conclude that 
$S^*_\beta S_\gamma=0$.

Next, taking again an advantage of  \eqref{herideal} and \eqref{quotient}, we obtain
\begin{align*}
\ker\chi_2&=\overline{I_{F_2}(F_2^0\setminus F_1^0)}
=\overline{{\rm span}}\{S_\alpha S_\beta^*\;|\; \alpha,\beta\in{\rm Path}(F_2), 
r_{P\!F_2}(\alpha)=r_{P\!F_2}(\beta)\in F_2^0\setminus F_1^0\}.
\end{align*}
Any element of $I_{F_2}(F_2^0\setminus F_1^0)$ is an element of $I_{E}(F_2^0\setminus F_1^0)$,
and $\pi_2(S_\alpha)=S_\alpha$ for all $\alpha\in{\rm Path}(F_2)$. Hence
$\pi_2(I_{E}(F_2^0\setminus F_1^0))\subseteq I_{F_2}(F_2^0\setminus F_1^0)$.
Finally, from the continuity of $\pi_2$, we conclude that 
$\pi_2(\ker\pi_1)\subseteq\ker\chi_2$. 
\end{proof}

\begin{remark}\rm
One can also prove Theorem~\ref{main} in the setting of Leavitt path algebras~\cite{aasm17}. 
A proof of the Leavitt version of Theorem~\ref{main} is completely analogous due to \cite[Corollary~2.5.11]{aasm17}.
\end{remark}

\section{Examples}

We end the paper by providing motivating examples  from noncommutative  topology.
\subsection{Even quantum spheres}
Not only the graph at the top of the diagram \eqref{podles} representing
 the generic Podle\'s quantum sphere \cite{p-p87} admits a natural admissible decomposition, but
also the finite graphs $L_{2n}$ \cite[Section~5.1]{hs02} representing, respectively, 
the C*-algebras $C(S^{2n}_q)$ of all even quantum spheres enjoy natural admissible decompositions~$\{F^1_{2n},F^2_{2n}\}$.
Here $C^*(F^1_{2n})=C^*(F^2_{2n})$ coincides with
the C*-algebra $C(B^{2n}_q)$ of the Hong-Szyma\'nski quantum $2n$-ball \cite[Section~3.1]{hs08}, 
and $C^*(F^1_{2n}\cap F^2_{2n})$
coincides \cite[Appendix~A]{hs02} with the C*-algebra $C(S^{2n-1}_q)$ of the boundary Vaksman-Soibelman quantum odd sphere \cite{vs91}. 
Thus we recover in terms of graphs the classical fact that an even sphere is a gluing 
of even balls over the boundary odd sphere.
 
As  Theorem~\ref{main} applies, we infer that 
the diagram
\begin{equation}
\begin{gathered}
\xymatrix{
& C(S^{2n}_q) \ar[ld]_{\pi_1} \ar[rd]^{\pi_2} \\
C(B^{2n}_q) \ar[rd]_{\chi_1}
& 
& C(B^{2n}_q) \ar[ld]^{\chi_2}\\
& C(S^{2n-1}_q)~
}
\end{gathered}
\end{equation}
is a pullback diagram. This fact was already proved in \cite[Proposition~5.1]{hs08} by direct considerations of generators and relations.

The case $n=3$ is illustrated by the diagram:
\begin{equation}
\begin{gathered}
\xymatrix{
&
\begin{tikzpicture}[scale=0.4,auto,swap]
\centering
\tikzstyle{vertex}=[circle,fill=black,minimum size=3pt,inner sep=0pt]
\tikzstyle{edge}=[draw,->]
\tikzset{every loop/.style={min distance=20mm,in=130,out=50,looseness=50}}
    \node[vertex] (1) at (-2,0) {};
    \node[vertex] (2) at (0,0) {};
    \node[vertex] (3) at (2,0) {};
    \node[vertex] (4) at (-2,-2) {};
    \node[vertex] (5) at (2,-2) {};
    \path (1) edge [edge, anchor=center, loop above] node {} (1);
    \path (2) edge [edge, anchor=center, loop above] node {} (2);
    \path (3) edge [edge, anchor=center, loop above] node {} (3);
    \path (1) edge [edge] node {} (2);
    \path (1) edge [edge] node {} (4);
    \path (1) edge [edge] node {} (5);
    \path (2) edge [edge] node {} (3);
    \path (2) edge [edge] node {} (4);
    \path (2) edge [edge] node {} (5);
    \path (3) edge [edge] node {} (4);
    \path (3) edge [edge] node {} (5);
\end{tikzpicture}
&\\
\begin{tikzpicture}[scale=0.4,auto,swap]
\centering
\tikzstyle{vertex}=[circle,fill=black,minimum size=3pt,inner sep=0pt]
\tikzstyle{edge}=[draw,->]
\tikzset{every loop/.style={min distance=20mm,in=130,out=50,looseness=50}}
    \node[vertex] (1) at (-2,0) {};
    \node[vertex] (2) at (0,0) {};
    \node[vertex] (3) at (2,0) {};
    \node[vertex] (4) at (-2,-2) {};
    \path (1) edge [edge, anchor=center, loop above] node {} (1);
    \path (2) edge [edge, anchor=center, loop above] node {} (2);
    \path (3) edge [edge, anchor=center, loop above] node {} (3);
    \path (1) edge [edge] node {} (2);
    \path (1) edge [edge] node {} (4);
    \path (2) edge [edge] node {} (3);
    \path (2) edge [edge] node {} (4);
    \path (3) edge [edge] node {} (4);
\end{tikzpicture}
\ar[ur]& & 
\begin{tikzpicture}[scale=0.4,auto,swap]
\centering
\tikzstyle{vertex}=[circle,fill=black,minimum size=3pt,inner sep=0pt]
\tikzstyle{edge}=[draw,->]
\tikzset{every loop/.style={min distance=20mm,in=130,out=50,looseness=50}}
    \node[vertex] (1) at (-2,0) {};
    \node[vertex] (2) at (0,0) {};
    \node[vertex] (3) at (2,0) {};
    \node[vertex] (5) at (2,-2) {};
    \path (1) edge [edge, anchor=center, loop above] node {} (1);
    \path (2) edge [edge, anchor=center, loop above] node {} (2);
    \path (3) edge [edge, anchor=center, loop above] node {} (3);
    \path (1) edge [edge] node {} (2);
    \path (1) edge [edge] node {} (5);
    \path (2) edge [edge] node {} (3);
    \path (2) edge [edge] node {} (5);
    \path (3) edge [edge] node {} (5);
\end{tikzpicture}
\ar[ul]\\
&
\begin{tikzpicture}[scale=0.4,auto,swap]
\centering
\tikzstyle{vertex}=[circle,fill=black,minimum size=3pt,inner sep=0pt]
\tikzstyle{edge}=[draw,->]
\tikzset{every loop/.style={min distance=20mm,in=130,out=50,looseness=50}}
    \node[vertex] (1) at (-2,0) {};
    \node[vertex] (2) at (0,0) {};
    \node[vertex] (3) at (2,0) {};
    \path (1) edge [edge, anchor=center, loop above] node {} (1);
    \path (2) edge [edge, anchor=center, loop above] node {} (2);
    \path (3) edge [edge, anchor=center, loop above] node {} (3);
    \path (1) edge [edge] node {} (2);
    \path (2) edge [edge] node {} (3);
\end{tikzpicture}
\ar[ur]\ar[ul]&
.}
\end{gathered}
\end{equation}
\subsection{Quantum lens space $L^3_q(l;1,l)$}
The C*-algebra $C(L^3_q(l;1,l))$ of the quantum lens space $L^3_q(l;1,l)$ can be viewed as the graph C*-algebra (e.g., see \cite{bs16})
of the graph $L^3_l$:
\begin{equation}
\begin{tikzpicture}[>=stealth,node distance=40pt,main node/.style={circle,inner sep=2pt},
freccia/.style={->,shorten >=1pt,shorten <=1pt},
ciclo/.style={out=130, in=50, loop, distance=40pt, ->},
ciclo2/.style={out=235, in=315, loop, distance=40pt, ->}]

      \node[main node] (1) {};
      \node (2) [below of=1] {$\cdots$};
      \node (3) [left of=2]{};
      \node (4) [left of=3]{};
      \node (5) [right of=2]{};
     \node (6) [right of=5] {};

      \filldraw (1) circle (0.06) node[right] {$\ v_0^0$};
      \filldraw (4) circle (0.06) node[left] {$v_0^1$};
      \filldraw (3) circle (0.06) node[left] {$v_1^1$};
      \filldraw (5) circle (0.06) node[right] {$v_{l-2}^1$};
      \filldraw (6) circle (0.06) node[right] {$v_{l-1}^1$};

      \path[freccia] (1) edge[ciclo] (1);
			\path[freccia] (4) edge[ciclo2] (4);
			\path[freccia] (3) edge[ciclo2] (3);
			\path[freccia] (5) edge[ciclo2] (5);
			\path[freccia] (6) edge[ciclo2] (6);

      \path[freccia] (1) edge (4);         							
\path[freccia] (1) edge (3)	; 
\path[freccia] (1) edge (5)	;           
\path[freccia] (1) edge (6)	;           
\end{tikzpicture}.
\end{equation}
\noindent
The graph $L^3_l$ enjoys an admissible decomposition $\{L^3_k,L^3_{l-k}\}$, where 
$k\in\{1,\ldots l-1\}$,
yielding, by Theorem~\ref{main}, the pullback diagram: 
\begin{equation}
\begin{gathered}
\xymatrix{
& C(L^{3}_q(l;1,l)) \ar[ld]_{\pi_1} \ar[rd]^{\pi_2} \\
C(L^{3}_q(k;1,k)) \ar[rd]_{\chi_1}
& 
& C(L^{3}_q(l-k;1,l-k)) \ar[ld]^{\chi_2}\\
& C(S^{1})~.
}
\end{gathered}
\end{equation}

Recall that $C^*(L^3_1)\cong C(S^3_q)$, so for $l=2$ we obtain the following pullback diagram:
\begin{equation}
\begin{gathered}
\xymatrix{
& C(L^{3}_q(2;1,2)) \ar[ld]_{\pi_1} \ar[rd]^{\pi_2} \\
C(S^3_q) \ar[rd]_{\chi_1}
& 
& C(S^3_q) \ar[ld]^{\chi_2}\\
& C(S^{1})~.
}
\end{gathered}
\end{equation}
Since the above diagram is $U(1)$-equivariant, it induces a pullback diagram for\linebreak
 \mbox{$U(1)$-fixed-point} subalgebras:
\begin{equation}
\begin{gathered}
\xymatrix{
& C(\mathbb{W}P^1_q(1,2)) \ar[ld]_{\pi_1} \ar[rd]^{\pi_2} \\
C(\mathbb{C}P^1_q) \ar[rd]_{\chi_1}
& 
& C(\mathbb{C}P^1_q) \ar[ld]^{\chi_2}\\
& \mathbb{C}~.
}
\end{gathered}
\end{equation}
Here $C(\mathbb{C}P^1_q)$ and $C(\mathbb{W}P^1_q(1,2))$ denote the quantum
complex projective space (see \cite[Section~2.3]{hs02})  and the quantum weighted projective space (see \cite[Section~3]{bs16}), respectively. 
Interestingly, the C*-algebras in the above diagram can be viewed as graph C*-algebras, and
an infinite graph 
representing $C(\mathbb{W}P^1_q(1,2))$ is a pushout of infinite graphs representing $C(\mathbb{C}P^1_q)$ over the graph 
consisting of one vertex and no edges representing~$\mathbb{C}$
(see Diagram~\eqref{inf} below). 
However, this example is beyond the scope of Theorem~\ref{main}, because the above diagram is no longer 
$U(1)$-equivariant and the infinite graphs are not row finite.

\begin{equation}\label{inf}
\begin{gathered}
\xymatrix{
&
\begin{tikzpicture}[scale=0.4,auto,swap]
\centering
\tikzstyle{vertex}=[circle,fill=black,minimum size=3pt,inner sep=0pt]
\tikzstyle{nothing}=[fill=white,minimum size=3pt,inner sep=0pt]
\tikzstyle{edge}=[draw,->]
\tikzstyle{unedge}=[draw,-]
    \node[vertex] (1) at (0,0) {};
    \node[nothing] (2) at (-2,0) {($\infty$)};
    \node[nothing] (3) at (2,0) {($\infty$)};
    \node[vertex] (4) at (-4,0) {};
    \node[vertex] (5) at (4,0) {};
    \path (1) edge [unedge] node {} (2);
    \path (2) edge [edge] node {} (4);
    \path (1) edge [unedge] node {} (3);
    \path (3) edge [edge] node {} (5);
\end{tikzpicture}
&\\
\begin{tikzpicture}[scale=0.4,auto,swap]
\centering
\tikzstyle{vertex}=[circle,fill=black,minimum size=3pt,inner sep=0pt]
\tikzstyle{nothing}=[fill=white,minimum size=3pt,inner sep=0pt]
\tikzstyle{edge}=[draw,<-]
\tikzstyle{unedge}=[draw,-]
    \node[vertex] (1) at (0,0) {};
    \node[nothing] (2) at (2,0) {($\infty$)};
    \node[vertex] (3) at (4,0) {};
    \path (1) edge [edge] node {} (2);
    \path (2) edge [unedge] node {} (3);
\end{tikzpicture}
\ar[ur]& & 
\begin{tikzpicture}[scale=0.4,auto,swap]
\centering
\tikzstyle{vertex}=[circle,fill=black,minimum size=3pt,inner sep=0pt]
\tikzstyle{nothing}=[fill=white,minimum size=3pt,inner sep=0pt]
\tikzstyle{edge}=[draw,<-]
\tikzstyle{unedge}=[draw,-]
    \node[vertex] (1) at (0,0) {};
    \node[nothing] (2) at (2,0) {($\infty$)};
    \node[vertex] (3) at (4,0) {};
    \path (3) edge [edge] node {} (2);
    \path (2) edge [unedge] node {} (1);
\end{tikzpicture}
\ar[ul]\\
&
\begin{tikzpicture}[scale=0.4,auto,swap]
\centering
\tikzstyle{vertex}=[circle,fill=black,minimum size=3pt,inner sep=0pt]
    \node[vertex] (1) at (0,0) {};
\end{tikzpicture}
\ar[ur]\ar[ul]&}
\end{gathered}
\end{equation}
Here edges with  ($\infty$) denote countably infinitely many edges.
\clearpage

\section*{Acknowledgements}\noindent
The work on this project was  partially supported by NCN-grant 2015/19/B/ST1/03098
(Piotr M.\ Hajac, Mariusz Tobolski)
and by a Simons Foundation Collaboration Grant (Sarah Reznikoff). 
Piotr M.\ Hajac is very grateful to Kansas State University for its hospitality and financial 
support provided by this Simons Foundation Collaboration Grant.  
It is a pleasure to thank  Carla Farsi for drawing our attention to using pushouts of graphs
in the theory of free groups, Tatiana Gateva--Ivanova for a helpful discussion concerning admissible decompositions of graphs, 
and  Aidan Sims for making us aware on how to prove our main result in a different context.

\end{document}